\theoremstyle{plain}
\newtheorem{theorem}{Theorem}[section]
\newtheorem{lemma}[theorem]{Lemma}
\newtheorem{proposition}[theorem]{Proposition}
\newtheorem{corollary}[theorem]{Corollary}
\theoremstyle{definition}
\newtheorem{example}[theorem]{Example}
\theoremstyle{remark}
\newtheorem*{remark}{Remark}
\newcommand{\bd}{\partial}
\newcommand{\eps}{\varepsilon}
\newcommand{\psh}{\mathcal{PSH}}
\newcommand{\suchthat}{\mathrel{;}}
\DeclarePairedDelimiter\abs{\lvert}{\rvert}
\DeclarePairedDelimiter\set{\{}{\}}
\title{Quasibounded plurisubharmonic functions}
\author{Mårten Nilsson}
\address{Center for Mathematical Sciences\\
  Lund University\\
  Box 118, SE-221 00 Lund, Sweden}
\email{marten.nilsson@math.lth.se}
\author{Frank Wikström}
\email{frank.wikstrom@math.lth.se}
\subjclass[2010]{Primary 32U05; Secondary 32U15, 31C10, 31C05}
\begin{document}

\maketitle
\begin{abstract}
   We extend the notion of quasibounded harmonic functions to the plurisubharmonic setting. As an application, using the theory of Jensen measures, we show that certain generalized Dirichlet problems with unbounded boundary data admit unique solutions, and that these solutions are continuous outside a pluripolar set.
\end{abstract}
\section{Introduction}

Quasibounded harmonic functions, i.e. harmonic functions that can be
approximated monotonely from below by \emph{bounded} harmonic functions, were
first introduced by Parreau~\cite{parreau}. In this work, we will develop an
analogous theory of quasibounded plurisubharmonic functions, and as an
application solve certain Dirichlet problems for unbounded maximal
plurisubharmonic functions.

As motivation, let us begin with a few simple examples.

\begin{example}
    Let $\Omega = \mathbb{D} \setminus \{ 0 \}$ be the punctured unit
    disc, and let $u(z) = -\log \abs{z}$.

    Then $u$ is harmonic (thus subharmonic) on $\Omega$ and if $v \le u$ is
    bounded and subharmonic on $\Omega$, by Riemann's removable singularities
    theorem, $v$ extends to a subharmonic function on $\mathbb{D}$.
    Consequently, by the maximum principle, $v \le 0$. In particular, $u$
    cannot be written as an increasing limit of bounded subharmonic functions.
\end{example}

A slightly more interesting example, where the domain is regular, is the following:

\begin{example}\label{ex:poisson-kernel}
    Let $\Omega = \mathbb{D}$ and let $u(z) = \frac{1-\abs{z}^2}{\abs{1-z}^2}$,
    i.e. the Poisson kernel for $\mathbb{D}$ with pole at $z=1$.

    If $v \le u$ is a bounded subharmonic function, then
 $\limsup_{z \to \zeta}
    v(z) \le 0$ for every $\zeta \in \partial\mathbb{D} \setminus \{ 1 \}$,
    so the extended maximum principle (see e.g.~\cite[Thm 3.6.9]{ransford})
    implies that $v \le 0$. Again, we see that $u$
    cannot be written as an increasing limit of bounded subharmonic functions.

This second example can be easily adapted to several variables. Let
\[
    u(z, w) = \frac{1-\abs{z}^2}{\abs{1-z}^2}
\]
and view $u$ as a function on the unit ball $\mathbb{B}$. Then $u$ is maximal
plurisubharmonic (even pluriharmonic) and every bounded plurisubharmonic function $v \le u$ must be
negative on $\mathbb{D} \times \{ 0 \}$.
\end{example}

Any bounded plurisubharmonic function is of course quasibounded, but in general, it is currently not
an easy task to determine whether a particular unbounded plurisubharmonic function is quasibounded or not.
owever, in 1974, Arsove and Leutwiler~\cite{arsove} gave necessary and sufficient conditions for quasiboundedness
in a generalized sense for a large set of functions, including non-negative subharmonic, harmonic and
superharmonic functions, extending a previous characterization for the harmonic case by Parreau~\cite{parreau}
and Yamashita~\cite{yamashita}. The central idea was to introduce a certain operator $S$ on the set of non-negative
functions on $\Omega$ admitting a superharmonic majorant, providing a reformulation of quasiboundedness since
for a harmonic function $h$,
\[
    h \text{ is quasibounded} \iff S(h)=0.
\]
In Section~\ref{sec:tame} and~\ref{sec:quasibounded}, we generalize their techniques to the plurisubharmonic setting.

Section~\ref{sec:dirichlet-problem} concerns the Dirichlet problem for the complex Monge--Ampère equation.
This problem is very well-studied, at least when the boundary data is a continuous function. The
problem amounts to solving the partial differential equation for $u \in \psh(\Omega)$,
\[
    \begin{cases}
        (dd^c u)^n = \mu, & \text{on $\Omega$} \\
        u = \phi, & \text{on $\partial\Omega$},
    \end{cases}
\]
where $\mu$ is a positive measure, and both the complex Monge--Ampère operator
$(dd^c u)^n$ and the boundary values of $u$ must be suitably interpreted.
Already in~1959, Bremermann~\cite{bremermann} showed that the problem has a
solution when $f \in C(\partial\Omega)$, $\mu = 0$ and $\Omega$ is a strictly
pseudoconvex domain with $C^2$ boundary. A few years later, in 1968,
Walsh~\cite{walsh} showed that Bremermann's solution is continuous on
$\bar\Omega$. Bedford and Taylor~\cite{bedford-taylor} generalized these results
to $\mu = f\,dV$ where $0 \le f \in C(\bar\Omega)$, again with a solution in
$C(\bar\Omega)$. B\l{}ocki~\cite{blocki} showed that $\Omega$ can be taken as
a $B$-regular domain or even as a hyperconvex domain under the additional assupmtion
that $\phi$ is the boundary value of \emph{some} plurisubharmonic function.

Many authors have generalized these results to measures $\mu$ that do not
necessarily have continuous density with respect to Lebesgue measure, but much less
has been done with weaker assumptions on the boundary data. In particular, we
are interested in solving the Dirichlet problem for maximal plurisubharmonic
functions (i.e.~$\mu = 0$) when $\phi$ is allowed to be a (continuous)
unbounded function. This problem comes up naturally when studying pluricomplex
Green functions with large singularity set, see for example
Lárusson--Sigur\dh{}sson~\cite{larusson}, Nguyen~\cite{dieu} and
Rashkovskii--Sigur\dh{}sson~\cite{rashkovski}. The main theorem of our paper, Theorem \ref{thm:dirichlet-problem}, gives sufficient conditions for the existence and uniqueness of a solution in this situation.

\section{Tame functions}\label{sec:tame}

Most of the results in Section~\ref{sec:tame} and~\ref{sec:quasibounded}
generalize results in~\cite{arsove} to the plurisubharmonic
setting, and the arguments needed are very similar. Notice that in general, our results are
weaker, 
since we cannot use linearity
(in particular, minimal plurisuperharmonic functions are not necessarily
plurisubharmonic) or Harnack's theorem (the limit of an increasing sequence of maximal plurisubharmonic functions is not necessarily a maximal plurisubharmonic function, since it may fail to be upper semicontinuous).

Let $\Omega \subset \mathbb{C}^n$ be a bounded domain and let
\[
    \mathcal{M} = \set{ f\colon \Omega \to [0,+\infty] \suchthat \exists u \in -\psh(\Omega), f \le u },
\]
i.e.\ the set of non-negative functions on $\Omega$ admitting a
\emph{plurisuperharmonic} majorant. If $\lambda \ge 0$, we let $(f-\lambda)^+ =
\max \{ f-\lambda, 0 \}$. Note that $f \in \mathcal{M}$ implies that
$(f-\lambda)^+ \in \mathcal{M}$ for every $\lambda \ge 0$.

Given $f \in \mathcal{M}$, we define the \emph{reduced} function $R_\lambda f$ by
\[
    (R_\lambda f)(z) = \inf \set{ v(z) \suchthat v \in -\psh(\Omega), v \ge (f-\lambda)^+},
\]
i.e. the infimum of all plurisuperharmonic majorants of $(f-\lambda)^+$. In
general, $R_\lambda$ is not plurisuperharmonic, since it may fail to be lower
semicontinuous, but if we define
\[
    S_\lambda f = (R_\lambda f)_*
\]
as the lower semicontinuous regularization of $R_\lambda$, then $S_\lambda \in
-\psh(\Omega)$ and $S_\lambda f = R_\lambda f$ outside a pluripolar set.

Next, note that $S_\lambda f$ is decreasing in $\lambda$ (for $\lambda \ge 0$).
Let
\[
    S_\infty f = \lim_{\lambda \to \infty} S_\lambda f.
\]
Again, $S_\infty$ might not be plurisuperharmonic, but
\[
    Sf = (S_\infty f)_*
\]
certainly is. This construction gives us an operator $S$ from $\mathcal{M}$
to the set of non-negative plurisuperharmonic functions.

Finally, we say that $f \in \mathcal{M}$ is \emph{tame} if $Sf \equiv 0$, and
\emph{singular} if $Sf \equiv f$, respectively.

\subsection{Basic properties of the operator $S$}

Let us collect some straight-forward but useful properties of the operator $S$
in the following proposition.

\begin{proposition}\label{prop:basic}
    Let $f, g \in \mathcal{M}$ and $\alpha \ge 0$. Then:
    \begin{enumerate}[(1)]
        \item $S(\alpha f) = \alpha S(f)$,
        \item $S(f + g) \le Sf + Sg$,
        \item if $f \le g$, then $Sf \le Sg$,
        \item $S( \min(f,g) ) \le \min( Sf, Sg )$.
    \end{enumerate}
\end{proposition}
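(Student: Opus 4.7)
The plan is to verify each property by descending through the stages of the construction $f \mapsto R_\lambda f \mapsto S_\lambda f = (R_\lambda f)_* \mapsto S_\infty f = \lim_\lambda S_\lambda f \mapsto Sf = (S_\infty f)_*$. The strategy is to establish the desired inequality at the level of $R_\lambda$ (where it follows directly from the competitor characterization), and then transfer it through each regularization and limit using the fact that two plurisuperharmonic functions satisfying an inequality off a pluripolar set satisfy it everywhere (applied to $\min(h_1, h_2)$, which is plurisuperharmonic since $\max(-h_1, -h_2)$ is plurisubharmonic).

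Properties (1), (3), and (4) are routine. For (3), $f \le g$ forces $(f-\lambda)^+ \le (g-\lambda)^+$, so every competitor in the infimum defining $R_\lambda g$ is also a competitor for $R_\lambda f$, giving $R_\lambda f \le R_\lambda g$; the operations $*$, decreasing limit, and $*$ again all preserve pointwise inequalities, yielding $Sf \le Sg$. Property (4) follows immediately from (3) applied to $\min(f,g) \le f,g$. For (1), the identity $(\alpha f - \lambda)^+ = \alpha(f-\lambda/\alpha)^+$ together with the fact that plurisuperharmonicity is preserved by positive scaling gives $R_\lambda(\alpha f) = \alpha R_{\lambda/\alpha}f$, and the remaining steps commute with positive scalars.

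The substantive content is (2). Starting from the elementary pointwise estimate
\[
    (f+g-2\lambda)^+ \le (f-\lambda)^+ + (g-\lambda)^+
\]
(checked by cases), I would take arbitrary plurisuperharmonic majorants $u \ge (f-\lambda)^+$ and $v \ge (g-\lambda)^+$, note that $u+v$ is a plurisuperharmonic majorant of $(f+g-2\lambda)^+$ and hence a competitor in the infimum defining $R_{2\lambda}(f+g)$, and take the two infima independently to obtain
\[
    R_{2\lambda}(f+g) \le R_\lambda f + R_\lambda g \quad\text{pointwise.}
\]

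The main obstacle is passing to $S_\lambda$, because the lsc regularization is \emph{not} subadditive: in general $(A+B)_* \ge A_* + B_*$, which is the wrong direction. The workaround is to exploit that $R$ and $S$ differ only on pluripolar sets. Off the (pluripolar) union of the exceptional sets for $R_\lambda f$, $R_\lambda g$, and $R_{2\lambda}(f+g)$, the displayed inequality reads $S_{2\lambda}(f+g) \le S_\lambda f + S_\lambda g$; since both sides are plurisuperharmonic, the extension principle above promotes this to $\Omega$. Letting $\lambda \to \infty$ yields $S_\infty(f+g) \le S_\infty f + S_\infty g$ pointwise. Applying the same pluripolar trick once more — using that $S_\infty f = Sf$, $S_\infty g = Sg$ off a pluripolar set and that $S(f+g) \le S_\infty(f+g)$ everywhere — gives $S(f+g) \le Sf + Sg$ off a pluripolar set, and the extension principle finishes the proof.
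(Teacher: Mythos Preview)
Your proof is correct and follows the same overall strategy as the paper: establish the inequality at the competitor level, obtaining $R_{2\lambda}(f+g) \le R_\lambda f + R_\lambda g$, then push it through the regularizations and the limit $\lambda \to \infty$. The paper's proof is in fact terser than yours and simply asserts $S_{2\lambda}(f+g) \le S_\lambda f + S_\lambda g$ directly ``by taking the infimum,'' glossing over exactly the point you flag, namely that lower semicontinuous regularization is superadditive rather than subadditive. Your pluripolar extension principle (two plurisuperharmonic functions satisfying an inequality off a pluripolar set satisfy it everywhere, via $\min(h_1,h_2)$ and the fact that psh functions agreeing a.e.\ agree identically) is precisely what is needed to close this gap, and you apply it again at the final step $S_\infty \to S$. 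So your argument is a more scrupulous version of the paper's; the paper's proof is not wrong, but it leaves implicit the justification you supply.
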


\begin{proof}
    Statement~(4) follows from~(3) which in turn follows directly from the
    definition of $S$.

    To prove~(1), assume that $u \in -\psh(\Omega)$ with $f \le u + \lambda$ for
    some $\lambda \ge 0$. Then $\alpha f \le \alpha u + \alpha\lambda$, and
    thus $S_{\alpha\lambda}(\alpha f) \le \alpha u$. Taking the infimum over all
    such $u$, we see that
    \[
        S_{\alpha\lambda}(\alpha f) \le \alpha S_\lambda f
    \]
    and letting $\lambda\to \infty$
    \[
        S(\alpha f) \le \alpha Sf.
    \]
    If $\alpha > 0$, $Sf = S(\alpha f/\alpha) \le S(\alpha f)/\alpha$, and
    combining the two inequalities gives $S(\alpha f) = \alpha S(f)$. If $\alpha =
    0$, (1) holds trivially.

    Finally, we prove~(2) using a similar argument: if $u$ and $v$ are non-negative
    plurisuperharmonic functions with $f \le u + \lambda$ and $g \le v + \lambda$
    for some $\lambda \ge 0$, then
    \[
        f + g \le u + v + 2\lambda
    \]
    so, taking the infimum over all such $u$ and $v$
    \[
        S_{2\lambda}(f+g) \le S_\lambda f + S_\lambda g
    \]
    and (2) follows by letting $\lambda \to \infty$.
\end{proof}

We can also establish the following uniqueness result.

\begin{lemma}\label{lemma:uniqueness}
    If $f, g \in \mathcal{M}$ such that $f \le g$ outside a
    pluripolar subset of $\Omega$, then $Sf \le Sg$ everywhere on
    $\Omega$.

    In particular, if $f = g$ quasi-everywhere, then $Sf \equiv Sg$.
\end{lemma}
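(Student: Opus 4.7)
The second assertion reduces to the first applied to both $f \le g$ and $g \le f$, so I focus on the first. Let $P \subset \Omega$ be a pluripolar set with $f \le g$ on $\Omega \setminus P$. The plan is to prove $R_\lambda f \le R_\lambda g$ off a pluripolar set, upgrade this to $S_\lambda f \le S_\lambda g$ everywhere via plurisuperharmonicity, and then pass to $\lambda \to \infty$.

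The key tool is a standard pluripolar perturbation: since $P$ is pluripolar and $\Omega$ bounded, I fix $\phi \in \psh(\Omega)$ with $\phi \le 0$ and $\phi \equiv -\infty$ on $P$ (take any psh defining function for $P$ and subtract its finite supremum on $\bar\Omega$). Given $\lambda \ge 0$, any $v \in -\psh(\Omega)$ with $v \ge (g-\lambda)^+$, and any $\eps > 0$, the function $v_\eps := v - \eps\phi$ is plurisuperharmonic and majorizes $(f-\lambda)^+$ everywhere on $\Omega$: on $P$ it equals $+\infty$, and off $P$ we have $v \ge (g-\lambda)^+ \ge (f-\lambda)^+$ together with $-\eps\phi \ge 0$. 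So $v_\eps$ is admissible in the infimum defining $R_\lambda f$, yielding $R_\lambda f \le v - \eps\phi$ pointwise. Taking the infimum over $v$ gives $R_\lambda f \le R_\lambda g - \eps\phi$, and letting $\eps \to 0$ at any point where $\phi > -\infty$ produces $R_\lambda f \le R_\lambda g$ outside a pluripolar set.

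Since $S_\lambda f$ and $S_\lambda g$ agree with $R_\lambda f$ and $R_\lambda g$ respectively off pluripolar sets, it follows that $S_\lambda f \le S_\lambda g$ quasi-everywhere. To upgrade this to a pointwise everywhere inequality I would apply the following standard fact to $-S_\lambda g, -S_\lambda f \in \psh(\Omega)$: if two psh functions satisfy $u_1 \le u_2$ outside a pluripolar (hence Lebesgue-null) set, then the identity $u_i(z_0) = \lim_{r \to 0^+} \mathrm{avg}_{B(z_0,r)} u_i$ forces $u_1 \le u_2$ everywhere. Letting $\lambda \to \infty$ preserves the inequality, and monotonicity of the lsc regularization yields $Sf \le Sg$. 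The only delicate point is the perturbation construction — in particular the requirement $\phi \le 0$ globally, without which $v_\eps$ could fail to majorize $(f-\lambda)^+$ on $\Omega \setminus P$; the remainder is bookkeeping against the definitions of $R_\lambda$, $S_\lambda$, and $S$.
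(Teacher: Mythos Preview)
Your argument is correct and follows the same idea as the paper's proof: perturb by $\eps$ times a nonnegative plurisuperharmonic function equal to $+\infty$ on the exceptional set, let $\eps\to 0$, and upgrade the resulting quasi-everywhere inequality to an everywhere one via plurisuperharmonicity. The paper streamlines your $R_\lambda$-level computation by working directly with $S$: since $f \le g + \eps\phi$ everywhere, Proposition~\ref{prop:basic} gives $Sf \le Sg + \eps S\phi$ in one line, and sending $\eps\to 0$ (noting $S\phi$ is finite off a pluripolar set) finishes.
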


\begin{proof}
    Assume that $f \le g$ outside a pluripolar subset $E \subset \Omega$. Then
    there is a non-negative plurisuperharmonic function $\phi$ such that
    $\phi(z) = +\infty$ for every $z \in E$.

    For every $\eps > 0$, we have $f \le g + \eps \phi$ on $\Omega$, so
    \[
        Sf \le Sg + \eps S\phi.
    \]
    If we let $\eps \searrow 0$, it follows that $Sf \le Sg$ on $\Omega
    \setminus E$, but since $Sf$ and $Sg$ are plurisuperharmonic, the inequality
    holds everywhere.
\end{proof}

Let us also formulate the following observation that will be useful
in section~\ref{sec:quasibounded}.

\begin{lemma}\label{lemma:add_tame}
    If $f, q \in \mathcal{M}$ and $q$ is tame, then
    $S(f+q) = Sf$.
\end{lemma}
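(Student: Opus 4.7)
The plan is to prove this as an immediate two-line consequence of Proposition~\ref{prop:basic}, using monotonicity for one direction and subadditivity together with tameness for the other.

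First I would establish $Sf \le S(f+q)$. Since $q \in \mathcal{M}$ takes values in $[0,+\infty]$, we have $f \le f + q$ pointwise on $\Omega$, so property~(3) of Proposition~\ref{prop:basic} immediately gives $Sf \le S(f+q)$.

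For the reverse inequality, I would apply subadditivity: property~(2) of Proposition~\ref{prop:basic} yields $S(f+q) \le Sf + Sq$, and since $q$ is tame we have $Sq \equiv 0$, so $S(f+q) \le Sf$. Combining the two inequalities gives $S(f+q) = Sf$.

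There is no real obstacle here; the lemma is a direct corollary of the basic properties. The only subtlety worth verifying is that $f + q$ really lies in $\mathcal{M}$, so that $S(f+q)$ is defined in the first place — but this is clear since if $u_1, u_2 \in -\psh(\Omega)$ are plurisuperharmonic majorants of $f$ and $q$ respectively, then $u_1 + u_2$ is a plurisuperharmonic majorant of $f+q$, and $u_1 + u_2$ is non-negative, so $f + q \in \mathcal{M}$.
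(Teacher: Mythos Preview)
Your proof is correct and matches the paper's own argument essentially line for line: the paper writes the single chain $Sf \le S(f+q) \le Sf + Sq = Sf$ invoking Proposition~\ref{prop:basic}, which is exactly your monotonicity-plus-subadditivity argument. Your added verification that $f+q \in \mathcal{M}$ is a harmless extra detail the paper leaves implicit.
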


\begin{proof}
    Since $f \le f + q$, it follows from Proposition~\ref{prop:basic} that
    \[
        Sf \le S(f+q) \le Sf + Sq = Sf + 0. \qedhere
    \]
\end{proof}

\section{Quasibounded plurisubharmonic functions}\label{sec:quasibounded}

We now specialize the preliminary work on tame functions in
Section~\ref{sec:tame} to plurisubharmonic functions.

%
%


Assume that $u \in \mathcal{M}$ is plurisubharmonic. Then, for every $\lambda
\ge 0$, $S_\lambda u$ is plurisuperharmonic (and so is $Su$).
In particular, for each $n = 1, 2, \ldots$, $u_n = u - S_n u$ is an upper bounded
plurisubharmonic function, since by definition
\[
    u \le S_n u + n.  
\]

\begin{theorem}\label{thm:qb-psh}
    Let $0 \le u \in \mathcal{M} \cap \psh(\Omega)$. If $u$ is tame,
    then $u$ is the limit of an increasing
    sequence of upper bounded plurisubharmonic functions on $\Omega$ outside a pluripolar set.
\end{theorem}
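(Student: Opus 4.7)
The plan is to exhibit the increasing sequence explicitly, using the candidate already hinted at in the text: set
\[
    u_n = u - S_n u, \qquad n = 1, 2, \ldots,
\]
and verify one by one that these are psh, uniformly bounded above, increasing in $n$, and converge to $u$ off a pluripolar set.

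First I would check the pointwise behavior. Since $u \ge 0$ is plurisubharmonic, $u$ is finite-valued, and $S_n u$ is a non-negative plurisuperharmonic function, so $-S_n u$ is plurisubharmonic. Hence $u_n \in \psh(\Omega)$. By construction of $S_n u$ as a regularized reduced function, $S_n u \ge (u-n)^+ \ge u - n$, so $u_n \le n$, giving the required upper bound. The monotonicity $u_n \le u_{n+1}$ follows immediately from the fact that $S_\lambda u$ is decreasing in $\lambda$, noted in Section~\ref{sec:tame}.

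The substantive step is to identify the pointwise limit. Clearly $\lim_{n\to\infty} u_n = u - S_\infty u$ wherever the right-hand side makes sense. By hypothesis $Su = (S_\infty u)_* \equiv 0$, and the main task is to deduce that $S_\infty u \equiv 0$ quasi-everywhere. This is precisely the kind of statement where pluripolar exceptional sets appear: the family $-S_\lambda u$ is an increasing family of plurisubharmonic functions, bounded above by $0$ on compact subsets (since $-S_\lambda u \le -(u-\lambda)^+ \le 0$ is not quite what we need — rather, we note $S_\lambda u \ge 0$, so $-S_\lambda u \le 0$). By the standard Bedford--Taylor / Choquet-type result on usc regularizations of increasing limits of psh functions, the usc regularization $(- S_\infty u)^* = -(S_\infty u)_* = -Su \equiv 0$ agrees with $-S_\infty u$ outside a pluripolar set $E$. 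Therefore $S_\infty u \equiv 0$ on $\Omega \setminus E$.

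Combining everything, for $z \in \Omega \setminus E$,
\[
    u_n(z) = u(z) - S_n u(z) \nearrow u(z) - S_\infty u(z) = u(z),
\]
which is the conclusion. The only delicate point I expect is the quasi-everywhere coincidence of $S_\infty u$ with its lower semicontinuous regularization $Su$; once that is in hand the rest of the argument is a direct assembly of the algebraic identities for $u_n$ and the monotonicity of $\lambda \mapsto S_\lambda u$.
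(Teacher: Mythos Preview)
Your proof is correct and follows exactly the same approach as the paper: define $u_n = u - S_n u$ and check that these are upper bounded psh functions increasing to $u$ quasi-everywhere. The paper's own proof is essentially a one-line assertion of this fact, so your write-up simply fills in the details (monotonicity of $S_\lambda u$, the bound $u_n \le n$, and the quasi-everywhere coincidence of $S_\infty u$ with $Su$ via the negligibility of the exceptional set for an increasing family of psh functions) that the paper leaves implicit.
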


\begin{proof}
    If $u$ is tame, then $Su = 0$, so
    \[
        u_n = u - S_n u
    \]
    is a sequence of bounded plurisubharmonic functions on $\Omega$ with $u_n
    \nearrow u$ quasi-everywhere.
    %
\end{proof}

Note that in the linear setting, the analogue of Theorem~\ref{thm:qb-psh}
is an equivalence that holds at all points of $\Omega$, cf.~\cite[Theorem 3.1]{arsove}. At this point, we do not know whether a tame plurisubharmonic function $u$ on $\Omega$ satisfies
\[
u(z) = \sup \{ v(z) : v\in \psh(\Omega), \sup v < +\infty \}
\]
\emph{everywhere} on $\Omega$ instead of just quasi-everywhere. We will return to this question in Section~\ref{sec:dirichlet-problem}.

Let us conclude this section with a necessary and sufficient condition for
tameness.

\begin{theorem}\label{thm:qb-psh2}
    Assume that $\psi\colon [-\infty, \infty] \to [0,+\infty]$ is any function
    such that $\psi(+\infty) = +\infty$ and
    \begin{equation}\label{eq:condition-phi}
        \lim_{t\to\infty} \frac{\psi(t)}{t} = +\infty.
    \end{equation}
    If $u$ is any function on $\Omega$ such that $\psi \circ u$ admits a
    plurisuperharmonic majorant, then $u^+$ is tame. In particular, if $u$
    is also plurisubharmonic, then $u^+$ is the limit of an increasing
    sequence of upper bounded plurisubharmonic functions on $\Omega$ outside a pluripolar set.

    Conversely, if $u^+$ is tame, there exists a nonnegative function $\phi$
    satisfying~\eqref{eq:condition-phi}, such that $\phi \circ u$ admits a
    plurisuperharmonic majorant. In fact, we may even take $\phi$ to be
    increasing and convex.
\end{theorem}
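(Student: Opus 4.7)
My plan is to prove the two implications of Theorem~\ref{thm:qb-psh2} separately. The direct implication is a straightforward comparison argument; the converse requires explicitly constructing $\phi$ from the data of the operator $S$.

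For the direct direction, I would exploit the superlinear growth of $\psi$ to compare $(u^+ - \lambda)^+$ with a suitable multiple of the plurisuperharmonic majorant $v$ of $\psi \circ u$. Concretely, given $M > 0$ choose $T$ with $\psi(t) \ge Mt$ for $t \ge T$; then for any $\lambda \ge T$ and any $z$ with $u^+(z) > \lambda$ one gets $u(z) \le \psi(u(z))/M \le v(z)/M$, hence $(u^+ - \lambda)^+ \le v/M$ everywhere on $\Omega$. This forces $S_\lambda u^+ \le v/M$, and letting first $\lambda \to \infty$ and then $M \to \infty$ yields $Su^+ = 0$ off the pluripolar set $\{v = +\infty\}$; since plurisuperharmonic functions coinciding Lebesgue-a.e.\ are equal (via their volume averages), $Su^+ \equiv 0$. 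The supplementary clause for plurisubharmonic $u$ is then immediate from Theorem~\ref{thm:qb-psh} applied to $u^+$, which lies in $\mathcal{M}\cap\psh(\Omega)$ by the very bound just derived.

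For the converse, assume $u^+$ is tame, so that $S_n u^+ \searrow 0$ quasi-everywhere. After fixing $z_0$ outside the (countable union of) pluripolar exceptional sets on which $S_n u^+(z_0) \not\to 0$ or $S_n u^+ \neq R_n u^+$ for some $n$, I would inductively choose integers $n_1 < n_2 < \cdots$ satisfying $k\,S_{n_k} u^+(z_0) \le 2^{-k}$, and define
\[
    \phi(t) = \sum_{k=1}^\infty k(t - n_k)^+, \qquad \phi(+\infty) = +\infty.
\]
This $\phi$ is evidently nonnegative, convex, and increasing; from $\phi(t) \ge k(t - n_k)$ for $t \ge n_k$ one reads off $\phi(t)/t \to +\infty$. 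To exhibit the plurisuperharmonic majorant of $\phi \circ u$, let $E = \bigcup_k \{S_{n_k} u^+ < R_{n_k} u^+\}$, which is pluripolar, and pick a nonnegative plurisuperharmonic function $\chi$ with $\chi \equiv +\infty$ on $E$ and $\chi(z_0) < +\infty$. Then
\[
    V = \sum_{k=1}^\infty k\,S_{n_k} u^+ + \chi
\]
is an increasing limit of plurisuperharmonic functions, finite at $z_0$ by construction, hence plurisuperharmonic. Pointwise, $\phi \circ u \le \sum_k k(u^+ - n_k)^+$; off $E$ each $S_{n_k} u^+$ equals $R_{n_k} u^+$ and dominates $(u^+ - n_k)^+$, so $V \ge \phi \circ u$, while on $E$ we have $V \equiv +\infty$.

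The principal obstacle is this last step of the converse: since the equalities $S_{n_k} u^+ = R_{n_k} u^+$ hold only off pluripolar sets, merely summing the $S_{n_k} u^+$ yields a plurisuperharmonic function majorizing $\phi \circ u$ only quasi-everywhere, not pointwise. The auxiliary potential $\chi$ is inserted precisely to upgrade the q.e.\ bound to a genuine pointwise majorant, while the geometric decay $k\,S_{n_k}u^+(z_0) \le 2^{-k}$ together with $\chi(z_0) < +\infty$ keeps $V$ from being identically $+\infty$.
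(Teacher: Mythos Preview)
Your argument is correct in both directions, and the forward implication matches the paper's proof almost verbatim (you work with a fixed majorant $v$ and the operators $S_\lambda$, the paper works with the operator $S$ and Proposition~\ref{prop:basic}, but the estimates are identical).

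For the converse, your strategy coincides with the paper's---build $\phi$ as a convex piecewise-linear sum $\sum c_k(t-n_k)^+$ and dominate $\phi\circ u$ by a convergent series of plurisuperharmonic functions---but the execution differs in one tactical point. You sum the regularized functions $S_{n_k}u^+$, which majorize $(u^+-n_k)^+$ only quasi-everywhere, and therefore must add an auxiliary potential $\chi$ with poles on the exceptional set $E$ to upgrade the bound to a pointwise one. The paper instead exploits that $R_{n_k}u^+$ is an \emph{infimum}: once $R_{n_k}u^+(z_0)<2^{-k}$, one may pick an actual plurisuperharmonic $v_k\ge(u-n_k)^+$ with $v_k(z_0)<2^{-k}$, and these $v_k$ dominate pointwise \emph{everywhere} by definition. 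Summing the $v_k$ then gives the majorant directly, with no need for $\chi$ and with the simpler choice $\phi(t)=\sum_k(t-n_k)^+$ (your extra weight $k$ is harmless but unnecessary, since already $\sum_{k\le m}(t-n_k)\ge mt-\text{const}$). Your route is perfectly valid; the paper's is just a notch more economical because it bypasses the regularization issue at the source.
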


\begin{proof}
    From the assumption on the growth of $\psi$, it follows that for every $\eps
    > 0$, there is a positive integer $N$ such that $t \le \eps\psi(t)$  for all
    $t \ge N$. In particular $t \le \eps\psi(t) + N$ for all $t$. Hence
    \[
        u^+(z) \le \eps\psi(u(z)) + N.
    \]
    Since $\psi \circ u \in \mathcal{M}$, we get
    \[
        Su^+ \le \eps S(\psi \circ u) + S(N) = \eps S(\psi \circ u).
    \]
    Letting $\eps \to 0$ finishes the proof.

    For the converse, note that
    \[
        Su^+(z) = \lim_{n\to\infty} S_n u^+(z) = \lim_{n\to\infty} R_n u^+(z) = 0
    \]
    where each equality holds for all $z$ outside a pluripolar set.
    Fix a point $z_0 \in \Omega$ such that all three equalities above are satsified,
    and choose a sequence $n_k \nearrow \infty$ such that
    \[
        R_{n_k} u^+(z_0) < \frac{1}{2^k}.
    \]
    Hence, there is a sequence of nonnegative plurisuperharmonic functions $(v_k)$
    with
    \[
        (u - n_k)^+ \le v_k \quad\text{and}\quad v_k(z_0) < \frac{1}{2^k}.
    \]
    Define the function $\phi : [-\infty, +\infty] \to [0,+\infty]$ by
    \[
        \phi(t) = \sum_{k=1}^\infty (t - n_k)^+.
    \]
    Note that the sum is actually finite for $t < +\infty$ and that $\phi$ is
    a continuous piecewise linear with $\phi'(t) = k$ for $n_k < t < n_{k+1}$,
    so $\phi$ is an increasing convex function.

    If $n_m < t < +\infty$,
    \[
        \phi(t) > \sum_{k=1}^m (t-n_k) = mt - \sum_{k=1}^m n_k,
    \]
    so in particular
    \[
        \liminf_{t\to+\infty} \frac{\phi(t)}{t} = +\infty.
    \]
    Finally,
    \[
        \phi \circ u = \sum_{k=1}^\infty (u - n_k)^+  \le \sum_{k=1}^\infty v_k
    \]
    and since the right hand side converges at $z=z_0$, we see that $\sum v_k$
    is a plurisuperharmonic majorant of $\phi \circ u$.
\end{proof}

\begin{corollary}\label{cor:tame-sufficient}
    Let $u \in \mathcal{M}$ and $0 < \alpha < 1$. Then $u^\alpha$, $\log^+ u$
    and $u^\alpha \log^+ u$ are all tame.
\end{corollary}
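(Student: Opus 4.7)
The plan is to treat each of the three functions separately, in each case applying the forward direction of Theorem~\ref{thm:qb-psh2} (or, for the last case, reducing to a function already handled) by constructing an appropriate $\psi$ or a suitable dominating function. The common observation is that since $u \in \mathcal{M}$, we have access to a plurisuperharmonic majorant of $u$ itself, and we only need to exhibit a $\psi$ satisfying the growth condition \eqref{eq:condition-phi} for which $\psi \circ f$ is bounded above by (a constant times) $u$, or more generally by a function already known to lie in $\mathcal{M}$.

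For $f = u^\alpha$ with $0 < \alpha < 1$, the natural choice is $\psi(t) = t^{1/\alpha}$ for $t \ge 0$ (and $\psi(t) = 0$ for $t < 0$, $\psi(+\infty) = +\infty$). Then $\psi \circ u^\alpha = u$, which has a plurisuperharmonic majorant by assumption. Since $1/\alpha > 1$, we have $\psi(t)/t = t^{1/\alpha - 1} \to +\infty$, so Theorem~\ref{thm:qb-psh2} yields tameness of $u^\alpha$.

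For $f = \log^+ u$, take $\psi(t) = e^t - 1$ for $t \ge 0$. A direct case-check ($u < 1$ and $u \ge 1$) shows $\psi(\log^+ u) \le u$, so $\psi \circ \log^+ u \in \mathcal{M}$, and $\psi(t)/t \to +\infty$ as $t \to +\infty$; Theorem~\ref{thm:qb-psh2} again applies.

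For $f = u^\alpha \log^+ u$, rather than producing $\psi$ directly, I would compare with a pure power. Fix any $\gamma$ with $\alpha < \gamma < 1$ and set $\beta = \gamma - \alpha > 0$. The elementary inequality $\log t \le t^\beta/\beta$ for $t \ge 1$ (with both sides zero otherwise, since $\log^+ u$ vanishes for $u<1$) gives
\[
    u^\alpha \log^+ u \le \tfrac{1}{\beta}\, u^{\alpha+\beta} = \tfrac{1}{\beta}\, u^\gamma.
\]
Since $u^\gamma$ is tame by the first part, Proposition~\ref{prop:basic}(1) and~(3) yield $S(u^\alpha \log^+ u) \le \tfrac{1}{\beta} S(u^\gamma) = 0$. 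The only mildly delicate point is this last case, where one must avoid the temptation to invert $\psi$ and instead bypass it by noting that logarithms are dominated by arbitrarily small powers; once $\gamma < 1$ is chosen, everything reduces to the already-established first statement.
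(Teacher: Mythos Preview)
Your proof is correct and is precisely the kind of argument the paper has in mind: the corollary is stated without proof as a direct consequence of Theorem~\ref{thm:qb-psh2}, and your choices $\psi(t)=t^{1/\alpha}$ and $\psi(t)=e^t-1$ are the natural ones, while your comparison $u^\alpha\log^+ u \le \tfrac{1}{\beta}u^\gamma$ combined with Proposition~\ref{prop:basic} is a clean way to handle the third case without inverting $t\mapsto t^\alpha\log t$.
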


\section{The Dirichlet problem for maximal unbounded plurisubharmonic functions}\label{sec:dirichlet-problem}

As an application of these concepts, we will show that
the Dirichlet problem for maximal plurisubharmonic functions with certain
unbounded boundary data admit solutions that are continuous outside a pluripolar set. We are not able to tackle the unbounded case in full generality, but if the
boundary function is assumed to be tame and continuous, we will show that the
Perron--Bremermann solution of the Dirichlet problem, as well as being continuous quasi-everywhere, is the unique maximal plurisubharmonic function with the prescribed boundary limits. As a tool,
we will use Jensen measures and a duality theorem by Edwards~\cite{edwards}. For
convenience of the reader, we include the necessary background here.

Let $X$ be a compact metric space, and let $\mathcal{F}$ be a convex cone of
upper semicontinuous and upper bounded real-valued functions on $X$
containing all the constants.

If $g\colon X \to \mathbb{R} \cup \set{ +\infty }$, we define
\[
    S^{\mathcal{F}} g(z) = \sup \set{ u(z) \suchthat u \in \mathcal{F}, u \le g }.
\]
If $z \in X$, we define a class of positive Radon measures
\[
    \mathcal{J}^\mathcal{F}_z = \set[\Big]{ \mu \suchthat u(z) \le \int u\,d\mu
    \enspace\text{for all $u \in \mathcal{F}$}},
\]
the so-called \emph{Jensen measures} for $\mathcal{F}$ with barycenter $z$.
Since $\mathcal{F}$ contains the constants, it follows that measures in
$\mathcal{J}^\mathcal{F}_z$ are probability measures. If $g\colon X \to
\mathbb{R} \cup \set{ +\infty }$, we define
\[
    I^{\mathcal{F}} g(z) = \inf \set[\Big]{ \int g\,d\mu \suchthat \mu \in J^{\mathcal{F}}_z }.
\]
Edwards' theorem shows that when $g$ is lower semicontinuous, $S^{\mathcal{F}} g
= I^{\mathcal{F}} g$. More precisely,

\begin{theorem}[Edwards' theorem]\label{thm:edwards}
    If $g\colon X \to \mathbb{R} \cup \set{ +\infty }$ is a Borel function on $X$,
    then $I^{\mathcal{F}} g \le S^{\mathcal{F}} g$. If additionally $g$
    is lower semicontinuous on $X$, then $I^{\mathcal{F}} g = S^{\mathcal{F}} g$.
\end{theorem}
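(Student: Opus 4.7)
The proof splits into two parts: the inequality for arbitrary Borel $g$, and the reverse inequality for lower semicontinuous $g$. The first is routine: for any $u \in \mathcal{F}$ with $u \le g$ and any $\mu \in \mathcal{J}^{\mathcal{F}}_z$,
\[
    u(z) \le \int u \, d\mu \le \int g \, d\mu
\]
by the Jensen property of $\mu$ and monotonicity of the integral, and taking the supremum over $u$ and the infimum over $\mu$ yields the claim (granting the integrals are well-defined, which they are since $g > -\infty$ and elements of $\mathcal{F}$ are upper bounded).

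The substantive direction, for lsc $g$, is established by Hahn--Banach separation. I would first reduce to continuous $g$ by writing $g$ as the pointwise increasing limit of continuous $g_n$ (possible on a compact metric space, after truncation from below if needed) and observing that $S^{\mathcal{F}} g_n \nearrow S^{\mathcal{F}} g$ directly from the definition, while $I^{\mathcal{F}} g_n \nearrow I^{\mathcal{F}} g$ by monotone convergence applied inside the infimum over Jensen measures, so equality for the $g_n$ promotes to equality for $g$. For continuous $g$ and any $c > S^{\mathcal{F}} g(z)$, the plan is to separate the convex cone
\[
    A = \set{ (h, t) \in C(X) \times \mathbb{R} \suchthat \exists u \in \mathcal{F},\ u \le h,\ u(z) \ge t }
\]
from the point $(g, c)$, which lies outside $A$ by the choice of $c$. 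Geometric Hahn--Banach in $C(X) \times \mathbb{R}$ together with the Riesz representation theorem produces a separating linear functional of the form $(h, t) \mapsto \int h \, d\nu + \beta t$ for a signed Radon measure $\nu$ and a scalar $\beta$. Testing this against distinguished elements of $A$---the origin, nonnegative continuous functions (to extract positivity of $\nu$), arbitrary constants (to pin down $\beta$), and the elements of $\mathcal{F}$ themselves---yields, after normalization, a probability measure $\mu$ satisfying the Jensen inequality for continuous $u \in \mathcal{F}$ together with $\int g \, d\mu \le c$.

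The Jensen inequality then extends from continuous to arbitrary usc elements of $\mathcal{F}$ by approximating any such $u$ from above by a decreasing sequence of continuous functions (always available on a compact metric space for upper bounded usc functions) and invoking monotone convergence. Letting $c \searrow S^{\mathcal{F}} g(z)$ then completes the continuous case, and the reduction described above finishes the lsc case. The main obstacle is the Hahn--Banach step itself: arranging the convex cone $A$ so that the separating functional simultaneously delivers positivity of $\nu$, the correct sign of $\beta$, the Jensen property for $\mathcal{F}$, and the bound on $\int g \, d\mu$. Once these four features are extracted in a single pass, the remaining verifications are routine.
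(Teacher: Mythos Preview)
The paper does not prove this theorem at all: it simply refers the reader to \cite{wikstrom} and remarks that the argument there, originally stated for bounded lower semicontinuous $g$, goes through unchanged when the boundedness assumption is dropped. So there is no in-paper proof to compare against; your Hahn--Banach separation argument is precisely the standard route taken in the cited reference, and in that sense your proposal matches the intended approach.

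Two points are still worth flagging. First, your ``routine'' computation actually establishes $S^{\mathcal{F}} g \le I^{\mathcal{F}} g$, which is the opposite of the inequality $I^{\mathcal{F}} g \le S^{\mathcal{F}} g$ asserted in the statement for Borel $g$. The easy direction is indeed $S^{\mathcal{F}} g \le I^{\mathcal{F}} g$, and the stated inequality appears to be a typo in the paper; you have silently corrected it, but you should say so explicitly rather than leave the reader to reconcile the mismatch. Second, in your reduction from lower semicontinuous to continuous $g$, the claim $I^{\mathcal{F}} g_n \nearrow I^{\mathcal{F}} g$ does not follow from monotone convergence alone: monotone convergence handles each fixed $\mu$, but interchanging the increasing limit with the infimum over $\mu \in \mathcal{J}^{\mathcal{F}}_z$ requires the weak-$*$ compactness of $\mathcal{J}^{\mathcal{F}}_z$ (extract a convergent subsequence of near-minimizers and pass to the limit). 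This is easily supplied, but as written the step is underspecified. The companion claim $S^{\mathcal{F}} g_n \nearrow S^{\mathcal{F}} g$ is neither obvious nor needed; the argument only requires $S^{\mathcal{F}} g \ge S^{\mathcal{F}} g_n = I^{\mathcal{F}} g_n$ together with $\sup_n I^{\mathcal{F}} g_n = I^{\mathcal{F}} g$.
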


For an accessible proof, we refer to~\cite{wikstrom}. Note that Edwards' theorem
in~\cite{wikstrom} is formulated for bounded lower semicontinuous functions $g$,
but the proof goes through without changes even if the condition on boundedness
is dropped.

In particular, we will apply Edwards' theorem for the two cones $\psh^-(\Omega)
= \mathcal{USC}(\bar\Omega) \cap \psh(\Omega)$ and $\psh^c(\Omega) =
C(\bar\Omega) \cap \psh(\Omega)$. We will denote the corresponding Jensen
measures by $\mathcal{J}^-_z(\Omega)$ and $\mathcal{J}^c_z(\Omega)$,
respectively.

It was shown in~\cite{wikstrom} (see also~\cite{dieu-wikstrom})
that if $\Omega$ is a bounded $B$-regular domain and $z \in \Omega$,
then $\mathcal{J}^-_z(\Omega) = \mathcal{J}^c_z(\Omega)$.

We can now formulate and prove our main result.
In order to use Edwards' theorem and the Jensen measures machinery,
we need to assume that $g$ is lower semicontinuous on $\bar\Omega$. In
fact, we will make the stronger assumption that $g^* = g_*$ on $\bar\Omega$,
i.e.\ that $g$ is a continuous on $\bar\Omega$ as an extended real-valued
function. This assumption guarantees that $S^{\mathcal{F}} g$ is plurisubharmonic
(without upper semicontinuous regularization).

\begin{theorem}\label{thm:dirichlet-problem}
    Assume that $\Omega \subset \mathbb{C}^n$ is a bounded $B$-regular domain, and
    $\phi$ is a lower bounded, tame plurisuperharmonic function on $\Omega$ such
    that $\phi^* = \phi_*$ on $\bar\Omega$.
    Then the associated Perron--Bremermann envelope,
    \[
        P\phi(z) = \sup \set{ u(z) \suchthat u \in \psh(\Omega), u^* \le \phi_* }
    \]
    is a maximal plurisubharmonic function that is continuous outside a pluripolar set.
    Furthermore, $P\phi$ is the unique maximal plurisubharmonic function with the correct boundary values, i.e.\ for $z_0 \in \bd\Omega,$
    \[
        \lim_{\Omega \ni \zeta \rightarrow z_0} P\phi(\zeta) = \phi(z_0),
    \]
    where the limit is taken in the extended real sense, and $\phi(z_0) = +\infty$ is allowed.
\end{theorem}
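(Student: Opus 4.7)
The plan is to approach the Dirichlet problem by approximating $\phi$ from below by its bounded truncations $\phi_n := \min(\phi, n)$, which are continuous plurisuperharmonic functions on $\bar\Omega$ with $\phi_n \nearrow \phi$. For each $n$, the classical theory for the Dirichlet problem with continuous bounded boundary data on a $B$-regular domain (Bedford--Taylor, B\l{}ocki) produces a unique continuous maximal plurisubharmonic solution $u_n := P\phi_n \in C(\bar\Omega) \cap \psh(\Omega)$ with $u_n|_{\partial\Omega} = \phi_n$. These form an increasing sequence dominated by $\phi$.

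The central step is to identify $\lim_n u_n$ with $P\phi$ via Edwards' theorem. Since $\phi^* = \phi_*$ on $\bar\Omega$, $\phi$ is in particular lower semicontinuous. Applying Theorem~\ref{thm:edwards} to the cone $\mathcal{F} = \psh^-(\Omega)$ gives $P\phi(z) = I^{\psh^-(\Omega)}\phi(z)$, and combining with the equality $\mathcal{J}^-_z(\Omega) = \mathcal{J}^c_z(\Omega)$ valid for $B$-regular $\Omega$ together with a second application of Edwards' theorem to $\mathcal{F} = \psh^c(\Omega)$ yields
\[
    P\phi(z) = \sup\set{ v(z) \suchthat v \in \psh^c(\Omega),\ v \le \phi }.
\]
Any such continuous plurisubharmonic $v$ is bounded on the compact $\bar\Omega$, hence $v \le \phi_n$ for $n \ge \sup v$, and therefore $v \le u_n$; combined with the fact that each $u_n$ itself belongs to the class, this gives $P\phi = \lim u_n$ pointwise on $\Omega$.

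From this identification, the three stated properties follow. $P\phi$ is lower semicontinuous as an increasing limit of continuous functions, while its upper semicontinuous regularization $(P\phi)^*$ is plurisubharmonic and agrees with $P\phi$ off a pluripolar set; in particular $P\phi$ is continuous (in the extended real sense) outside a pluripolar set. Maximality of $(P\phi)^*$ is inherited from the $u_n$ via the Bedford--Taylor monotone convergence theorem for the Monge--Ampère operator: since $(dd^c u_n)^n = 0$ for each $n$, passing to the increasing limit gives $(dd^c (P\phi)^*)^n = 0$. For the boundary limits, at each $z_0 \in \partial\Omega$ we have $\liminf_{z \to z_0} P\phi(z) \ge u_n(z_0) = \phi_n(z_0) \to \phi(z_0)$, while $\limsup_{z \to z_0} P\phi(z) \le \phi(z_0)$ follows from $P\phi \le \phi$ and the upper semicontinuity of $\phi$.

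For uniqueness, suppose $v$ is another maximal plurisubharmonic function on $\Omega$ with the prescribed boundary limits. Then $v^* \le \phi_*$ on $\bar\Omega$, so $v \le P\phi$ by definition of the envelope. For the reverse, I would compare each $u_n$ with $v$: at every $z_0 \in \partial\Omega$,
\[
    \limsup_{z \to z_0}(u_n(z) - v(z)) \le \phi_n(z_0) - \phi(z_0) \le 0,
\]
so the comparison principle for the maximal plurisubharmonic function $v$ gives $u_n \le v$ on $\Omega$, and passing to the limit yields $P\phi \le v$. The main obstacle is justifying this comparison in the possibly unbounded setting, and this is precisely where the tameness hypothesis on $\phi$ becomes essential: for tame $\phi$, the operator $S$ furnishes the quantitative decomposition $\phi \le \lambda + S_\lambda \phi$ with $S_\lambda \phi \searrow 0$ quasi-everywhere, reducing the unbounded comparison to the bounded case through perturbations by small multiples of $S_\lambda \phi$, with Lemma~\ref{lemma:uniqueness} providing the mechanism to upgrade quasi-everywhere estimates to everywhere.
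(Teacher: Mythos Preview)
Your proposal follows the paper's route---approximate by $u_n = P\phi_n$, invoke Edwards' duality and $\mathcal{J}^-_z = \mathcal{J}^c_z$---but misplaces the role of tameness, and this creates a genuine gap. The assertion that Edwards' theorem yields $P\phi = I^{\psh^-}\phi$ is unjustified: Edwards gives $S^{\psh^-}\phi = I^{\psh^-}\phi$, where $S^{\psh^-}\phi$ is the envelope over $\psh^-(\Omega) = \mathcal{USC}(\bar\Omega)\cap\psh(\Omega)$, whose members are necessarily \emph{upper bounded}. The Perron--Bremermann envelope $P\phi$ ranges over all $u\in\psh(\Omega)$ with $u^*\le\phi_*$, so a priori one only has $S^{\psh^-}\phi \le P\phi$. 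Closing this gap is precisely where the paper spends the tameness hypothesis: since $P\phi \le \phi$ with $\phi$ tame, Proposition~\ref{prop:basic} and Theorem~\ref{thm:qb-psh} make every function in the defining family of $P\phi$ quasibounded quasi-everywhere, forcing $P\phi = (S^{\psh^-}\phi)^*$. Without this step you only obtain $\lim u_n \le P\phi$, and neither the lower semicontinuity of $P\phi$ nor your Bedford--Taylor argument for maximality of $(P\phi)^*$ follows. (The paper instead proves maximality by the standard gluing argument for Perron envelopes, which needs no tameness.)

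By contrast, your uniqueness step does \emph{not} require tameness. For each $k>0$ the set $\{u_n > v + k\}$ is open and, by the assumed boundary limits of $u_n$ and $v$, relatively compact in $\Omega$; the very definition of maximality of $v$ then gives $u_n \le v + k$ on that set, with no boundedness hypothesis on $v$. The paper argues essentially this way, localizing to a relatively compact subdomain where the bounded comparison principle applies. Your proposed detour through $S_\lambda\phi$ is unnecessary here and, as written, too vague to constitute an argument.
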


\begin{proof}
    Note that we may assume that $\phi \ge 0$.
    Since $P\phi \le \phi$ on $\Omega$, it follows that on $\bar\Omega$,
    \[
        (P\phi)^* \le \phi^*
    \]
    so $(P\phi)^*$ is a plurisubharmonic function in the defining family
    of $P\phi$, so $(P\phi)^* \le P\phi$. Conversely, it is clear that
    $(P\phi)^* \ge P\phi$, so $P\phi = (P\phi)^*$ is
    plurisubharmonic.

    Maximality of $P\phi$ is standard, but for completeness, we include the
    argument. Let $\Omega' \Subset \Omega$ and assume that $u \le P\phi$ on
    $\Omega' \setminus \Omega$. Then
    \[
        \tilde u = \begin{cases}
            P\phi, & z \in \Omega' \setminus \Omega \\
            \max \set{ u, P\phi}, & z \in \Omega'
        \end{cases}
    \]
    is plurisubharmonic on $\Omega$ and $\tilde u \le \phi$, so $\tilde u \le P\phi$, i.e. $u \le P\phi$ everywhere on $\Omega$, which shows that $P\phi$ is maximal.

    Furthermore, since $P\phi$ is majorized by the tame function $\phi$, it follows
    from Proposition~\ref{prop:basic} and Theorem~\ref{thm:qb-psh} that $P\phi$ is
    tame, and thus quasibounded, and consequently so are all the functions in the defining
    family for $P\phi$. Hence, using Theorem~\ref{thm:edwards} and the equality of Jensen measures
    for upper bounded plurisubharmonic functions and continuous plurisubharmonic
    functions~\cite{dieu-wikstrom, wikstrom} on $B$-regular domains, it follows that for $z \in \Omega$,
    \begin{align*}
        P\phi(z) &= \sup \set{ u(z) \suchthat u \in \psh(\Omega), u^* \le \phi_* } \\
              &= \big( \sup \set{ u(z) \suchthat u \in \psh^-(\Omega), u^* \le \phi_* } \big)^* \\
              &= \Big( \inf \set{ \int \phi_*\,d\mu \suchthat \mu \in \mathcal{J}_z^-(\Omega) } \Big)^* \\
              &= \Big( \inf \set{ \int \phi_*\,d\mu \suchthat \mu \in \mathcal{J}_z^c(\Omega) } \Big)^* \\
              &= \big( \sup \set{ u(z) \suchthat u \in \psh(\Omega)  \cap C(\bar\Omega), u^* \le \phi_* } \big)^*.
    \end{align*}
    Thus, $P\phi$ coincides outside of a pluripolar set with
    a lower semicontinuous function (namely the supremum of a family of continuous functions)
    and since $P\phi$ is plurisubharmonic, it is also
    upper semicontinuous. This establishes the claim of continuity outside
    a pluripolar set.

    We now show the statement about the boundary values of $P\phi$.
    Let $\phi_n$ be a sequence of continuous plurisuperharmonic
    functions such that $\phi_n \nearrow \phi$ on $\overline{\Omega}$.
    Then $P \phi_n$ are continuous functions up to the boundary, as solutions
    to the generalized Dirichlet problem with regards to $\Omega$,
    see B\l{}ocki~\cite{blocki}.

    Now take $z_0 \in \bd \Omega$. If $\phi(z_0) < +\infty$, for every
    $\eps > 0$, there exists
    an~$n$ such that $\phi_n(z_0) > \phi(z_0) - \eps$. We then have
    \[
        \phi(z_0) - \eps < \phi_n(z_0) =
        \liminf_{\Omega \ni \zeta \rightarrow z_0} P\phi_n(\zeta) \leq
        \liminf_{\Omega \ni \zeta \rightarrow z_0} P\phi(\zeta),
    \]
    yet
    \[
    \limsup_{\Omega \ni \zeta \rightarrow z_0} P\phi(\zeta) \leq \limsup_{\Omega \ni \zeta \rightarrow z_0} \phi(\zeta) = \phi(z_0).
    \]
    Since $\eps$ was chosen arbitrarily, it follows that
    $\lim_{\Omega \ni \zeta \rightarrow z_0} P\phi(\zeta) = \phi(z_0)$.
    If on the other hand, $\phi(z_0)=\infty$, then
    $\lim_{\Omega \ni \zeta \rightarrow z_0} P\phi_n(\zeta)$ may be made
    arbitrarily large. It follows that $\liminf_{\Omega \ni \zeta \rightarrow z_0}
    P\phi(\zeta)= \infty$, which implies that
        \[
        \lim_{\Omega \ni \zeta \rightarrow z_0} P\phi(\zeta) = \infty = \phi(z_0)
    \]
    in the extended reals.

    It remains to show the statement about uniqueness. Suppose that $v$ is another maximal plurisubharmonic function with the correct boundary values. Then, $v \leq P\phi$, because $v$ is a member in the defining family for $P\phi$. Since $P\phi$ may be approximated from below outside a pluripolar set by functions in $\psh(\Omega)  \cap C(\bar\Omega)$, there exists a point $z_0 \in \Omega$, a bounded continuous maximal plurisubharmonic function $u_b\leq \phi$ and $k>0$ such that $v(z_0)+k < u_b(z_0)$. In particular, the open set
    \[
    E=\{z \in \Omega : v(z)-u_b(z) < -k\}
    \]
    is non-empty, and the boundary values of $v$ guarantees that $E \Subset \Omega$. Thus we may find an open set $\tilde \Omega$ such that $E \Subset \tilde \Omega \Subset \Omega$ and $v \geq u_b - k$ on $\bd \tilde \Omega$. Since both $v$ and $u_b - k$ are maximal and belong to $L^\infty(\tilde \Omega)$, the comparison principle yields in particular $v(z_0) +k \geq u_b(z_0)$, a contradiction.
\end{proof}
\begin{remark}
One might argue that it would be more natural to drop the assumption
that $\phi_* = \phi^*$ and consider the ``free'' Perron--Bremermann
function,
\[
    \tilde P\phi(z) = \sup \set{ u(z) \suchthat u \in \psh(\Omega), u \le \phi },
\]
but note that with the assumption that $\phi_* = \phi^*$, it follows that if
$u \le \phi$ on $\Omega$, then $u^* \le \phi^* = \phi_*$ on $\bar\Omega$, so
in this case (for $z \in \Omega$), $P\phi = \tilde P\phi$.

Without the continuity assumptions, things can be quite different. In general,
the free Perron--Bremermann might not be plurisubharmonic without upper
semicontinuous regularization, and even if it is, it might not attain the
correct boundary values. Also, in general $\tilde P \phi > P\phi_*$.
To see this, take $\phi = u$ as in Example~\ref{ex:poisson-kernel}
as the Poisson kernel on the unit disc in $\mathbb{C}$. Then $\tilde P\phi = \phi$,
but $P\phi_* \equiv 0$.

We would also like to point out that if
\[
    \sup \set{ u(z) \suchthat u \in \psh^-(\Omega), u^* \le \phi_* }
\]
is plurisubharmonic (without upper semicontinuous regularization), it would
follow that $P\phi$ is continuous everywhere on $\bar\Omega$. In all the examples
below, this is the case, and we are not aware of any examples where
the upper semicontinuous regularization is actually needed, assuming $\phi$ is continuous.

\end{remark}

Even if the plurisuperharmonic function $\phi$ is not tame, it is always possible to weaken
its singularities enough for Theorem~\ref{thm:dirichlet-problem} to apply. In this situation,
it is also possible to specify a set on which the solution is guaranteed to be continuous.

\begin{theorem}
    Assume that $\Omega \subset \mathbb{C}^n$ is a bounded $B$-regular domain, and that
    $\phi$ is a lower bounded plurisuperharmonic function on $\Omega$ such
    that $\phi^* = \phi_*$ on $\bar\Omega$.
    Then for any increasing concave function $\psi$ defined on an interval $[a,+\infty]$ containing the range of $\phi$, such that
    $$ \lim_{t\rightarrow \infty} \frac{\psi(t)}{t}\rightarrow 0,$$
    $P(\psi\circ \phi)$ is continuous on $\Omega \setminus  \{z \in \Omega : \phi(z)=\infty\}$, and constitutes the unique solution to the generalized Dirichlet problem associated to $\psi \circ \phi$.
\end{theorem}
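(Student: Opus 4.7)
The plan is to reduce to Theorem~\ref{thm:dirichlet-problem} applied to the composition $\psi\circ\phi$, and then to sharpen the resulting continuity statement from ``outside a pluripolar set'' to ``on $\Omega\setminus N$'', where $N=\{z\in\Omega : \phi(z)=+\infty\}$. First I would verify the hypotheses of Theorem~\ref{thm:dirichlet-problem} for $\psi\circ\phi$. Setting $\eta(t)=-\psi(-t)$, which is increasing and convex since $\psi$ is increasing and concave, one has $\psi\circ\phi=-\eta\circ(-\phi)$; the standard composition rule for plurisubharmonic functions then makes $\psi\circ\phi$ plurisuperharmonic. Continuity of $\psi$ together with $\phi^*=\phi_*$ gives $(\psi\circ\phi)^*=(\psi\circ\phi)_*$ on $\bar\Omega$, and lower boundedness is inherited from $\phi$. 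The substantive point is tameness: assuming $\psi$ is unbounded (the bounded case is trivial, since then $\psi\circ\phi$ is bounded and the full Dirichlet theory applies directly), I would apply Theorem~\ref{thm:qb-psh2} with the generalized inverse $\psi^{-1}$ as the test growth function. The sublinearity $\psi(t)/t\to 0$ is exactly equivalent to $\psi^{-1}(s)/s\to+\infty$, and since $\psi^{-1}\circ(\psi\circ\phi)\le\phi$, the function $\phi$ itself is a plurisuperharmonic majorant, yielding tameness.

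Applying Theorem~\ref{thm:dirichlet-problem} to $\psi\circ\phi$ now produces a maximal plurisubharmonic function $P(\psi\circ\phi)$ that is continuous outside some pluripolar set, attains the correct boundary values $\psi(\phi(z_0))$, and is unique with these properties---so the uniqueness assertion of the present theorem is inherited directly. The main obstacle is upgrading continuity to the open set $\Omega\setminus N$. I would approximate $\phi$ from below by continuous plurisuperharmonic functions $\phi_k\nearrow\phi$ on $\bar\Omega$, available as in the proof of Theorem~\ref{thm:dirichlet-problem}. Each $\psi\circ\phi_k$ is then continuous and plurisuperharmonic, and B\l{}ocki's theorem ensures $P(\psi\circ\phi_k)$ is continuous on $\bar\Omega$; let $u=\sup_k P(\psi\circ\phi_k)$, which is lower semicontinuous as a supremum of continuous functions. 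A repetition of the boundary-value argument in the proof of Theorem~\ref{thm:dirichlet-problem} shows that $u^*$ is maximal plurisubharmonic with the correct boundary limits, and the uniqueness clause of Theorem~\ref{thm:dirichlet-problem} forces $u^*=P(\psi\circ\phi)$. Continuity at a point $z_0\in\Omega\setminus N$ therefore reduces to the equality $u(z_0)=u^*(z_0)$, after which lower semicontinuity of $u$ and upper semicontinuity of $u^*$ combine to give continuity.

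I expect the hard step to be this last equality, where the finiteness $\phi(z_0)<+\infty$ enters decisively. Using Edwards' Theorem~\ref{thm:edwards} together with the $B$-regular identity $\mathcal{J}^-_z(\Omega)=\mathcal{J}^c_z(\Omega)$, one may represent $P(\psi\circ\phi_k)(z_0)=\inf\{\int\psi\circ\phi_k\,d\mu : \mu\in\mathcal{J}^c_{z_0}(\Omega)\}$ without upper semicontinuous regularization, since $\psi\circ\phi_k$ is continuous and bounded. The key observation is that plurisuperharmonicity of $\phi$ forces $\int\phi\,d\mu\le\phi(z_0)<+\infty$ for every $\mu\in\mathcal{J}^c_{z_0}(\Omega)$, so $\mu(N)=0$ and $\phi$ is $\mu$-integrable. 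A weak-$*$ compactness selection of near-optimal measures $\mu_k$ for each $P(\psi\circ\phi_k)(z_0)$, combined with monotone convergence $\int\psi\circ\phi_j\,d\mu_\infty\nearrow\int\psi\circ\phi\,d\mu_\infty$ along a weak-$*$ limit $\mu_\infty\in\mathcal{J}^c_{z_0}(\Omega)$, should be enough to swap the limit in $k$ with the infimum over Jensen measures and promote $u(z_0)\le P(\psi\circ\phi)(z_0)$ to an equality, finishing the argument.
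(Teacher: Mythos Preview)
Your reduction to Theorem~\ref{thm:dirichlet-problem}---checking that $\psi\circ\phi$ is plurisuperharmonic, continuous on $\bar\Omega$, and tame via Theorem~\ref{thm:qb-psh2} applied with the convex increasing function $\psi^{-1}$---is correct and is exactly what the paper does. The gap is in your upgrade of continuity to all of $\Omega\setminus N$. Your weak-$*$ compactness argument does achieve the interchange of limit and infimum, giving
\[
u(z_0)=\lim_k \inf_{\mu\in\mathcal{J}^c_{z_0}}\int\psi\circ\phi_k\,d\mu=\inf_{\mu\in\mathcal{J}^c_{z_0}}\int\psi\circ\phi\,d\mu,
\]
but by Edwards' theorem the right-hand side is the \emph{unregularized} envelope $W(z_0):=\sup\{v(z_0):v\in\psh^c(\Omega),\,v\le\psi\circ\phi\}$, whereas the chain of equalities in the proof of Theorem~\ref{thm:dirichlet-problem} only identifies $P(\psi\circ\phi)$ with $W^*$. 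You have thus shown $u(z_0)=W(z_0)$, not $u(z_0)=W^*(z_0)=u^*(z_0)$; whether $W=W^*$ at a given point is exactly the open issue flagged in the remark following Theorem~\ref{thm:dirichlet-problem}. Note also that your minimax step nowhere uses $\phi(z_0)<\infty$: the swap works at every $z_0\in\Omega$, so the argument as written cannot be what singles out $\Omega\setminus N$. Your observation that $\int\phi\,d\mu\le\phi(z_0)$ is correct but idle.

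The paper closes this gap by extracting a pointwise inequality directly from the growth hypothesis: since $t\le\eps\,\psi^{-1}(t)+N_\eps$ for all $t\ge 0$ and $\psi^{-1}\circ P(\psi\circ\phi)\le\psi^{-1}\circ(\psi\circ\phi)=\phi$, one obtains $P(\psi\circ\phi)\le\eps\phi+N_\eps$ for every $\eps>0$. Then $u_\eps:=\max\{P(\psi\circ\phi)-\eps\phi,\,0\}$ is a \emph{bounded} plurisubharmonic minorant of $\psi\circ\phi$, hence $u_\eps\le W$, and at each $z_0$ with $\phi(z_0)<\infty$ one has $u_\eps(z_0)\to P(\psi\circ\phi)(z_0)$ as $\eps\to 0$. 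This yields $W(z_0)\ge P(\psi\circ\phi)(z_0)=W^*(z_0)$ at every point of $\Omega\setminus N$, after which the sandwich between the lower semicontinuous $W$ and the upper semicontinuous $W^*$ gives continuity there. The construction of the $u_\eps$ is the missing idea in your outline.
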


\begin{proof}
    Note that if either $\phi$ or $\psi$ is bounded, $\psi \circ \phi$ is bounded and continuous
    on the boundary, and so the result follows as an instant of the generalized Dirichlet problem
    with continuous boundary data, see B\l{}ocki~\cite{blocki}. Further, adding constants if necessary,
    we may without loss of generality assume that $\phi\geq0$ and that $\psi$ is a bijection from
    $[0, +\infty]$ to $[0, +\infty]$. As $\psi\circ \phi$ is plurisuperharmonic, and $\psi^{-1}$ is
    an increasing convex function, it follows from Theorem~\ref{thm:qb-psh2} and
    Theorem~\ref{thm:dirichlet-problem} that $P(\psi\circ \phi)$ is continuous quasi-everywhere,
    and constitutes the unique solution to the generalized Dirichlet problem associated to $\psi \circ \phi$.

    It remains to show that $P(\psi \circ \phi) \in C(\Omega\setminus  \{z \in \Omega : \phi(z)=\infty\})$.
    Clearly, $\phi$ is a plurisuperharmonic majorant to $\psi^{-1}\circ P(\psi \circ \phi)$. Reasoning
    as in the proof of~\eqref{eq:condition-phi} of Theorem~\ref{thm:qb-psh2}, this implies that for every
    fixed $\eps>0$, there is a positive integer $N$ such that
    \[
        P(\psi\circ \phi) \le \eps\psi^{-1}\circ P (\psi\circ \phi) + N \le \eps \phi + N.
    \]
    In particular,
    \[
        u_{\eps} := \max\{P(\psi\circ \phi) - \eps \phi,0 \}
    \]
    defines a sequence of bounded plurisubharmonic functions converging to $P(\psi\circ \phi)$ at all points outside the set $\{z \in \Omega : \phi(z)=\infty\}$ as $\eps \rightarrow 0$, and continuity follows.
\end{proof}
\begin{corollary}
    Assume that $\Omega \subset \mathbb{C}^n$ is a bounded $B$-regular domain, and
    that $\phi \ge 0$ is a plurisuperharmonic function on $\Omega$ such that $\phi^*=\phi_*$ on $\overline{\Omega}$. Then, for
    every $0 < \alpha < 1$, the Perron--Bremermann envelope of $\phi^\alpha$ is
    continuous on $\Omega \setminus \{z \in \Omega : \phi(z)=\infty\}$, and constitutes the unique solution to the corresponding Dirichlet problem. The same holds for the Perron--Bremermann envelope
    of $\log^+ \phi$ and $\phi^\alpha \log^+ \phi$.
\end{corollary}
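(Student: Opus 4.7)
The plan is to handle each of the three cases by applying, or by mimicking the proof of, the preceding theorem. The case $\phi^\alpha$ is immediate: $\psi(t)=t^\alpha$ is an increasing concave bijection of $[0,+\infty]$ with $\psi(t)/t \to 0$, so the preceding theorem applies verbatim.

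For $f=\log^+\phi$ and $f=\phi^\alpha\log^+\phi$ the analogous choice of $\psi$ is not concave, so the preceding theorem cannot be quoted literally and I would re-run its proof. Tameness of $f$ itself is given by Corollary~\ref{cor:tame-sufficient}. The essential ingredient in the proof of Theorem~\ref{thm:dirichlet-problem} is that every plurisubharmonic $u$ in the defining family of $P(f)$ be quasibounded, which I would establish using Theorem~\ref{thm:qb-psh2}: for $f=\log^+\phi$, apply it with $\tilde\psi(t)=e^t$ and note that $e^u \le e^{\log^+\phi} \le 1+\phi$ is a plurisuperharmonic majorant; for $f=\phi^\alpha\log^+\phi$, apply it with $\tilde\psi(t)=t^{(1-\eta)/\alpha}$ for any fixed $0<\eta<1-\alpha$, and use the standard sublinearity of logarithmic powers against polynomials to show that $\tilde\psi(u)$ is bounded above by a constant multiple of $\phi$, plus a constant. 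Granted this, the Edwards--Jensen-measure chain of equalities in the proof of Theorem~\ref{thm:dirichlet-problem} carries over word-for-word, yielding continuity of $P(f)$ quasi-everywhere and uniqueness of the maximal plurisubharmonic solution to the Dirichlet problem with boundary data $f$.

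The final step, upgrading continuity from quasi-everywhere to all of $\Omega\setminus\{z:\phi(z)=\infty\}$, is verbatim the closing argument in the proof of the preceding theorem: in each of the three cases $f(t)/t \to 0$ as $t\to\infty$ gives $P(f) \le \eps\phi + N(\eps)$ for every $\eps>0$, so $u_\eps=\max\{P(f)-\eps\phi,0\}$ is a monotone family of bounded plurisubharmonic functions converging to $P(f)$ pointwise on $\{\phi<\infty\}$, and continuity follows as there. The main obstacle is the growth bookkeeping needed for the product $\phi^\alpha\log^+\phi$ when invoking Theorem~\ref{thm:qb-psh2}; the corresponding estimates for $\phi^\alpha$ and $\log^+\phi$ individually are immediate.
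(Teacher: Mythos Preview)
Your proposal is correct and, in one respect, more careful than the paper itself. The paper gives no proof at all; the corollary is placed immediately after the preceding theorem, and the intended argument is simply to apply that theorem with $\psi(t)=t^\alpha$, $\psi(t)=\log^+ t$ and $\psi(t)=t^\alpha\log^+ t$ respectively. You correctly observe that the last two choices of $\psi$ fail the concavity hypothesis (because of the kink at $t=1$), so the theorem cannot be quoted verbatim in those cases; in particular $\log^+\phi$ need not be plurisuperharmonic, which is a stated hypothesis of Theorem~\ref{thm:dirichlet-problem}. The paper glosses over this point. Your remedy---re-running the proofs of Theorem~\ref{thm:dirichlet-problem} and of the preceding theorem with $f$ in place of $\psi\circ\phi$, observing that only tameness and continuity of $f$ (and the growth condition $f/\phi\to 0$) are actually used---is sound and does fill the gap.

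One simplification: your separate verification of tameness via Theorem~\ref{thm:qb-psh2} with $\tilde\psi(t)=e^t$ or $\tilde\psi(t)=t^{(1-\eta)/\alpha}$ is unnecessary. You already invoke Corollary~\ref{cor:tame-sufficient}, which states directly that $\log^+\phi$ and $\phi^\alpha\log^+\phi$ are tame for any $\phi\in\mathcal{M}$; then Proposition~\ref{prop:basic}(3) makes every $u\le f$ in the defining family of $P(f)$ tame, and Theorem~\ref{thm:qb-psh} gives the quasiboundedness you need. So the ``growth bookkeeping'' for $\phi^\alpha\log^+\phi$ that you flag as the main obstacle can be avoided entirely.
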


As previously noted, the proof of Theorem~\ref{thm:dirichlet-problem} shows that if $P\phi$
is quasibounded (even if $\phi$ itself is not tame), then $P\phi \in C(\Omega)$,
but currently we are not aware of any direct conditions that ensure
quasiboundedness of $P\phi$. Indeed, as the following two examples illustrate, 
$P\phi$ may very well be quasibounded, even when $\phi$ fails to be tame.

\begin{example}
    Let $\Omega = \mathbb{B}$ be the unit ball in $\mathbb{C}^2$, let $0 <
    \alpha < 1$, and let $\phi(z, w) = (- \log\abs{z})^\alpha$. Then $\phi$ is
    tame, so $P\phi$ is the unique solution to the Dirichlet problem. In fact,
    \[
        P\phi(z) = \Big(-\frac12 \log(1-\abs{w}^2) \Big)^\alpha.
    \]
    To obtain the explicit formula for $P\phi$, fix $w \in \mathbb{D}$ and
    consider the analytic disc $f\colon \mathbb{D} \to \mathbb{B}$ where
    $f(\zeta) = \big(\zeta \sqrt{1-\abs{w}^2}, w \big)$. Then
    $\phi(f(e^{i\theta})) = \big(-\frac12 \log(1-\abs{w}^2)\big)^\alpha$ for
    every $\theta$, so $\phi$ is constant on $f(\partial\mathbb{D})$. By the
    maximum principle, if $u \le \phi$ is plurisubharmonic, then
    \[
        u(f(\zeta),w) \le \Big(-\frac12 \log(1-\abs{w}^2)\Big)^\alpha.
    \]
    Varying $w$ and $\zeta$, we see that
    \[
        u(z,w) \le \Big(-\frac12 \log(1-\abs{w}^2)\Big)^\alpha,
    \]
    for every $(z, w) \in \mathbb{B}$ and taking the supremum over $u$,
    \[
        P\phi(z,w) \le U:= \Big(-\frac12 \log(1-\abs{w}^2)\Big)^\alpha.
    \]
    A direct computation shows that
    \[
        \frac{\partial^2 U}{\partial w \partial \bar w} =
        \frac{\alpha \big(-\log(1-\abs{w}^2)\big)^\alpha \big( (\alpha-1)\abs{w}^2 - \log(1-\abs{w}^2)\big)}{(1-\abs{w}^2)^2 \big(\log(1-\abs{w}^2)\big)^2}
    \]
    and all the factors, except possibly $q(w) = (\alpha-1)\abs{w}^2 -
    \log(1-\abs{w}^2)$ are non-negative for $w\in\mathbb{D}$. It is a
    straight-forward calculus exercise to show that $ct - \log(1-t) \ge 0$ for
    $0 < t < 1$ when $c \ge -1$, so in fact $q(w) \ge 0$ as well. Hence $U$
    is plurisubharmonic on $\mathbb{B}$ and thus $P\phi \ge U$. So, in fact
    \[
        P\phi(z,w) = U(z,w) = \Big( -\frac12 \log(1-\abs{w}^2) \Big)^\alpha.
    \]
    In the case $\alpha = 1$, the above computation goes through without any
    changes, but in this case, $\phi$ is \emph{not} tame and
    Theorem~\ref{thm:dirichlet-problem} does not apply.

    To see that $\phi(z,w) = -\log\abs{z}$ is not tame, take any nonnegative function $\psi$
    satisfying~\eqref{eq:condition-phi}. If $v$ is a plurisuperharmonic
    majorant of $\psi \circ \phi$, then $u = -v$ is a plurisubharmonic
    minorant of $-\psi \circ \phi$. Hence, the Lelong number of $u$ at $z=0$
    is
    \begin{align*}
        \nu_u(0,0) &= \liminf_{(z,w) \to (0,0)} \frac{2u(z,w)}{\log(\abs{z}^2 + \abs{w}^2)} \\
        &\ge \liminf_{(z,w) \to (0,0)} \frac{-2\psi(-\log\abs{z})}{\log(\abs{z}^2 + \abs{w}^2)} \\
        &= \liminf_{(z,w) \to (0,0)} \frac{\psi(-\log\abs{z})}{-\log\abs{z}}
        \frac{\log\abs{z}^2}{\log(\abs{z}^2 + \abs{w}^2)} = \infty,
    \end{align*}
    which is a contradiction and Theorem~\ref{thm:qb-psh} shows that $\phi$ is not tame.

    On the other hand,
    \[
        P\phi(z,w) = -\frac12 \log(1-\abs{w}^2),
    \]
    which is clearly continuous on $\mathbb{B}$. Furthermore
    \[
        P\phi(z,w) = -\frac12 \log(1-\abs{w}^2) =
        \sup_{r > 1} \set[\Big]{-\frac12 \log(1-r\abs{w}^2)}
    \]
    which shows that $P\phi$ is quasibounded. 
\end{example}
\begin{example}
    Let $\Omega = \mathbb{B}$ be the unit ball in $\mathbb{C}^2$, let $0 <
    \alpha < 1$, and let $\phi(z, w) = (- \log\abs{zw})^\alpha$. Again, $\phi$ is
    tame, and  $P\phi(z)$ may explicitly be written as
    \[
        P\phi(z)=
        \begin{cases}
            \Big(-\frac12 \log(\abs{z}^2-\abs{z}^4) \Big)^\alpha  &\abs{z} \geq \frac{\sqrt{2}}{2} \\
            \Big(-\frac12 \log(\abs{w}^2-\abs{w}^4) \Big)^\alpha  &\abs{w} \geq \frac{\sqrt{2}}{2} \\
            (\log 2)^\alpha  &\abs{z},\abs{w} \leq \frac{\sqrt{2}}{2}.

        \end{cases}
    \]
    To see this, note that on the boundary, $\phi$ attains the value $(\log 2)^\alpha$ at $|z|=|w|=\frac{\sqrt{2}}{2}$, so it is clear that $P\phi$ is bounded from above by $(\log 2)^\alpha$ on the bidisk $\{z,w \in \mathbb{C}^2: \abs{z}, \abs{w} \leq \frac{\sqrt{2}}{2}\}$. $P\phi$ is also bounded from above by the other two cases above, as is easily seen as in the previous example using the analytic discs
    \[
    f_1(\zeta) = \big(\zeta \sqrt{1-\abs{w}^2}, w \big),\ f_2(\zeta) = \big(z,\zeta \sqrt{1-\abs{z}^2} \big),
    \]
    fixing $w,z \in \mathbb{D}$ respectively. We now check that the above function
    is indeed a plurisubharmonic function. Due to symmetry and a standard gluing
    lemma for plurisubharmonic functions, it is enough to consider points where
    $\abs{z}>\frac{\sqrt{2}}{2}$ to see that the Laplacian is non-negative in
    the sense of distributions. We have
    \[
        \frac{\partial^2 P\phi}{\partial z \partial \bar z}
        = \alpha\abs{z}^2 \big(-\log(\abs{z}^2-\abs{z}^4)\big)^\alpha\cdot \frac{(\alpha-1)(1-2\abs{z}^2)^2-\abs{z}^2\log(\abs{z}^2-\abs{z}^4)}{(\abs{z}^2-\abs{z}^4)^2(-\log(\abs{z}^2-\abs{z}^4)^2},
    \]
    where all factors are positive except possibly $(\alpha-1)(1-2\abs{z}^2)^2-\abs{z}^2\log(\abs{z}^2-\abs{z}^4)$. Again, it is straightforward to check that indeed, $(\alpha-1)(1-2t)^2-t\log(t-t^2)>0$ for $0 <
    \alpha < 1$, $t> \frac{1}{2}$, so $P\phi$ is plurisubharmonic.

    Similar calculations show that the above formula for $P\phi$ remains true when $\alpha = 1$. However, using the fact that  $-\log\abs{z}\leq -\log\abs{zw}$ together with property (3) of Proposition~\ref{prop:basic}, we have that
    \[
     S(-\log\abs{z}) \leq S(-\log\abs{zw}),
    \]
    which implies that  $-\log\abs{zw}$ is not tame.
\end{example}
In both these examples, locally, $P(\phi^\alpha)$ depends on one variable only. Thus, this is true for the plurisubharmonic function $P(\phi^\alpha)^{\frac{1}{\alpha}}$ as well, which then constitutes a solution to the generalized Dirichlet problem associated to $\phi$. We collect this observation in the following proposition.
\begin{proposition}
     Assume that $\Omega \subset \mathbb{C}^n$ is a bounded $B$-regular domain, and
    that $\phi\geq 0$ is a plurisuperharmonic function on $\Omega$ such that $\phi^*=\phi_*$ on $\overline{\Omega}$. Further assume that there exist a continuous, concave, strictly increasing function $\psi\colon [0,+\infty) \to [0,+\infty)$ 
 such that $\psi\circ \phi$ is tame and $P(\psi\circ \phi)$ locally depends on at most $n-1$ variables. Then $\psi^{-1}\circ P(\psi\circ \phi)$ is a quasi-everywhere continuous solution to the generalized Dirichlet problem with respect to $\phi$ on $\Omega$.
\end{proposition}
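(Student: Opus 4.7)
The plan is to verify that $w := \psi^{-1}\circ P(\psi\circ \phi)$ is plurisubharmonic, maximal, continuous outside a pluripolar set, and attains the boundary values $\phi$ on $\partial\Omega$. The nontrivial ingredient will be maximality.

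First I would apply Theorem~\ref{thm:dirichlet-problem} to the function $\psi\circ \phi$. Since $\psi$ is continuous, concave and increasing, $\psi\circ \phi$ is a nonnegative plurisuperharmonic function on $\Omega$; moreover $(\psi\circ\phi)^* = (\psi\circ\phi)_*$ on $\bar\Omega$ because $\phi^* = \phi_*$ there and $\psi$ is continuous (extended to $[0,+\infty]$ by $\psi(+\infty) := \lim_{t\to\infty}\psi(t)$). By hypothesis $\psi\circ\phi$ is tame, so Theorem~\ref{thm:dirichlet-problem} yields that $u := P(\psi\circ\phi)$ is a maximal plurisubharmonic function on $\Omega$, continuous outside a pluripolar set, and with boundary limit $\psi(\phi(z_0))$ at each $z_0 \in \partial\Omega$ in the extended real sense.

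Next, since $\psi$ is concave and strictly increasing, $\psi^{-1}$ is convex and strictly increasing on the range of $\psi$, and the composition $w = \psi^{-1}\circ u$ of a plurisubharmonic function with a convex increasing function is plurisubharmonic on $\Omega$. Continuity of $w$ outside a pluripolar set is immediate from that of $u$ together with the continuity of $\psi^{-1}$. Similarly, for each $z_0 \in \partial\Omega$, the continuity of $\psi^{-1}$ (as an extended real-valued map) gives $\lim_{\Omega\ni\zeta \to z_0} w(\zeta) = \psi^{-1}(\psi(\phi(z_0))) = \phi(z_0)$.

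The main step, and the only place where the proposition's specific hypothesis enters, is maximality. In a neighborhood $U$ of any point $p \in \Omega$, the assumption provides local holomorphic coordinates in which $u|_U$ is independent of one of the coordinate directions; since $w|_U = \psi^{-1}\circ u|_U$ is a scalar function of $u|_U$, it is independent of that same direction. A plurisubharmonic function whose local Levi form is degenerate in a fixed direction satisfies $(dd^c w)^n = 0$ in the sense of currents and is therefore maximal. Assembling these ingredients, $w$ is the required quasi-everywhere continuous maximal plurisubharmonic solution to the generalized Dirichlet problem for $\phi$.
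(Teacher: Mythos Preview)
Your argument is correct and follows the same route the paper indicates: the proposition is stated in the paper without a formal proof, preceded only by the remark that since $P(\psi\circ\phi)$ locally depends on at most $n-1$ variables, so does $\psi^{-1}\circ P(\psi\circ\phi)$, and this suffices for it to be a maximal plurisubharmonic solution. You have simply fleshed out the steps (application of Theorem~\ref{thm:dirichlet-problem} to $\psi\circ\phi$, plurisubharmonicity via convexity of $\psi^{-1}$, transfer of boundary limits and quasi-everywhere continuity, and maximality from the degeneracy of the Levi form), which is exactly what the paper's one-line justification intends.

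One small technical remark on your maximality step: invoking $(dd^c w)^n=0$ presupposes that $w$ lies in the domain of the Monge--Amp\`ere operator. This is in fact fine here, since $w\ge 0$ and, being plurisubharmonic, $w$ is upper semicontinuous and hence locally bounded above; thus $w$ is locally bounded and the Bedford--Taylor product is well defined. Alternatively, one can bypass the operator entirely and argue maximality directly: in a chart where $w$ is independent of $z_n$, for any competitor $v$ the slice $z_n\mapsto v(z',z_n)$ is subharmonic and dominated by the constant $w(z')$ on the boundary of the slice, hence everywhere. Either way your conclusion stands.
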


The proof of Theorem~\ref{thm:dirichlet-problem} also shows that if $P\phi$
is quasibounded \textit{quasi-everywhere} (even if $\phi$ itself is not tame), then $P\phi$ is the unique solution. In our setting, these are in fact equivalent statements, as the following theorem shows.

\begin{theorem}
Assume that $\Omega \subset \mathbb{C}^n$ is a bounded $B$-regular domain, and
    that $\phi \ge 0$ is a plurisuperharmonic function on $\Omega$ such that $\phi^*=\phi_*$ on $\overline{\Omega}$. Then every $v$ solving the associated Dirichlet problem satisfies
    $$ \big(\lim_{n\rightarrow \infty} P(\min\{n, \phi\})\big)^* \leq v \leq P \phi.$$
    In particular, $P\phi$ is quasibounded quasi-everywhere if and only if $P\phi$ uniquely solves the associated Dirichlet problem.
\end{theorem}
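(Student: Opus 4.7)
The plan is to establish the sandwich $\underline{v} \le v \le P\phi$ for every solution $v$, where $\underline{v} := (\lim_{n\to\infty} P\phi_n)^*$ with $\phi_n = \min\{n, \phi\}$, and then to observe that $\underline{v}$ is itself always a solution, which immediately yields the equivalence. The upper bound $v \le P\phi$ is argued exactly as in the uniqueness part of Theorem~\ref{thm:dirichlet-problem}: $v$ lies in the defining family of $P\phi$, since $v^* = \phi_*$ on $\partial\Omega$ and the standard maximum principle comparing the plurisubharmonic $v$ to the plurisuperharmonic $\phi$ with matching boundary data yields $v \le \phi$ on $\Omega$.

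The lower bound is the main technical step. Each $\phi_n$ is bounded continuous plurisuperharmonic, so by B\l{}ocki, $P\phi_n \in C(\bar\Omega)$ is maximal with boundary values $\phi_n$. Fix $n$ and $\eps > 0$, and consider the open set
\[
    D_{n,\eps} = \set{ z \in \Omega \suchthat v(z) + \eps < P\phi_n(z) }.
\]
As $z \to z_0 \in \partial\Omega$, $(P\phi_n - v - \eps)(z) \to \phi_n(z_0) - \phi(z_0) - \eps \le -\eps < 0$, so $D_{n,\eps} \Subset \Omega$. On $\partial D_{n,\eps} \subset \Omega$ one has $P\phi_n - \eps \le v$, and applying maximality of $v$ to the plurisubharmonic function $P\phi_n - \eps$ on the subdomain $D_{n,\eps}$ extends this inequality to all of $D_{n,\eps}$, contradicting the definition unless $D_{n,\eps} = \emptyset$. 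Hence $v + \eps \ge P\phi_n$ on $\Omega$; letting $\eps \to 0$ and then $n \to \infty$, and using upper semicontinuity of $v$, gives $\underline{v} \le v$.

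For the equivalence, it remains to check that $\underline{v}$ is itself always a solution. Plurisubharmonicity is automatic from the regularization; maximality follows from Bedford--Taylor applied to the regularized increasing limit of locally bounded maximal plurisubharmonic functions (local boundedness off the pluripolar set $\{\phi = +\infty\}$ holds because $\underline{v} \le P\phi \le \phi$); and the boundary values are pinned to $\phi$ from below by $\underline{v} \ge P\phi_n$ and $P\phi_n(z_0) = \phi_n(z_0) \nearrow \phi(z_0)$, and from above by $\underline{v} \le P\phi \le \phi$ together with continuity of $\phi$ in the extended sense. If $P\phi$ is the unique solution, then $\underline{v} = P\phi$, so $P\phi$ is quasibounded quasi-everywhere by construction. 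Conversely, if there exist bounded plurisubharmonic $u_k \nearrow P\phi$ quasi-everywhere, each bounded by some $M_k$, then $u_k \le P\phi \le \phi$ together with $u_k \le M_k$ puts $u_k$ in the defining family of $P\phi_{M_k}$, so $u_k \le P\phi_{M_k} \le \lim_n P\phi_n$. Letting $k \to \infty$ gives $P\phi \le \lim P\phi_n$ quasi-everywhere, and the reverse inequality is immediate, so $P\phi = \underline{v}$ as plurisubharmonic functions. The sandwich then forces any solution $v$ to coincide with $P\phi$.

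The main obstacle is the lower bound, where unboundedness of $v$ prevents direct use of the Bedford--Taylor comparison principle; the key device is to work on the relatively compact sublevel set $D_{n,\eps}$ where the local form of maximality of $v$ can be invoked.
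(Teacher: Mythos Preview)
Your proof is correct and follows the same overall architecture as the paper's: establish the sandwich, then show $\underline{v}$ is always a solution to conclude the equivalence. The paper's own proof is terser, deferring both the lower bound and the boundary behaviour of $\underline{v}$ to the phrase ``reasoning as in the proof of Theorem~\ref{thm:dirichlet-problem}''; you have written out those steps explicitly.

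One point where your argument is actually cleaner than the paper's: for the lower bound, the paper invokes the comparison principle (as in the uniqueness part of Theorem~\ref{thm:dirichlet-problem}), which in that earlier proof relied on both competitors lying in $L^\infty(\tilde\Omega)$. Here $v$ is not assumed bounded, so one has to observe separately that a plurisubharmonic function is automatically locally bounded above and hence the comparison principle still applies on $\tilde\Omega$. Your route through the open set $D_{n,\eps}$ and the \emph{definition} of maximality sidesteps this entirely, since the characterisation ``$\{u>v\}$ cannot be relatively compact'' needs no boundedness hypothesis on $v$.

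Two minor cosmetic remarks. First, your parenthetical about local boundedness of $\underline{v}$ ``off the pluripolar set $\{\phi=+\infty\}$'' undersells the situation: since $\underline{v}\ge 0$ and is plurisubharmonic (hence locally bounded above), it is genuinely locally bounded on all of $\Omega$, so Bedford--Taylor applies globally without any excision. Second, in the upper bound you appeal to a ``standard maximum principle'' to get $v\le\phi$ on $\Omega$; this is exactly what the paper asserts as well (``$v$ is a member in the defining family''), but note that both arguments are a little brisk at boundary points where $\phi=+\infty$, since $v-\phi$ has indeterminate boundary behaviour there. One clean way to close this is to work with $v-(1+\eps)\phi$ away from the (closed, negligible) boundary set $\{\phi=+\infty\}$ and use that $v$ is locally bounded above; but this is a shared feature of both proofs rather than a defect of yours.
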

\begin{proof}
    As $h$ is a plurisubharmonic function in the defining family for $P\phi$, it is clear that $h \leq P\phi$. To show the second inequality, assume that there exists a point $z_0 \in \Omega$ where it does not hold. This implies that there exist $n,k>0$ such that $v(z_0)+k < P(\min\{n, \phi(z_0)\})$, and reasoning as in the proof of Theorem~\ref{thm:dirichlet-problem} concerning uniqueness, we reach a contradiction.

    To show the last statement, note that every bounded plurisubharmonic function is bounded by $P(\min\{n, \phi\})$ for some $n$, and so the regularization of the envelope over bounded plurisubharmonic functions coincides with $\big(\lim_{n\rightarrow \infty} P(\min\{n, \phi\})\big)^*$, which is a maximal plurisubharmonic function as the complex Monge--Ampère operator is continuous along increasing sequences. Again reasoning as in the proof of Theorem~\ref{thm:dirichlet-problem}, $\big(\lim_{n\rightarrow \infty} P(\min\{n, \phi\})\big)^*$ also has the correct boundary limits, and thus solves the associated Dirichlet problem.
\end{proof}
We end with an example of when the two envelopes do not coincide, illustrating the difficulty of addressing the question of uniqueness for (unbounded) maximal plurisubharmonic functions with given boundary values in full generality.

\begin{example}
    Let $\Omega = \mathbb{B}$ be the unit ball in $\mathbb{C}^2$. Define
    \[
        u(z,w)=\frac{1-\abs{z}^2}{\abs{1-z}^2}, \ v(z,w)=\frac{\abs{w}^2}{\abs{1-z}^2}.
    \]
    As was shown in connection to Example 1.2, $u$ is a maximal plurisubharmonic function, but not quasibounded. In fact, $v$ is a maximal plurisubharmonic function as well, as
    \[
        \frac{\partial^2 v}{\partial z \partial \bar z}+\frac{\partial^2 v}{\partial w \partial \bar w} = \frac{\abs{1-z}^2 + \abs{w}^2}{\abs{1-z}^4} \geq 0,
    \]
    and
    \[
        \frac{\partial^2 v}{\partial z \partial \bar z}\cdot \frac{\partial^2 v}{\partial w \partial \bar w}- \frac{\partial^2 v}{\partial z \partial \bar w}\cdot \frac{\partial^2 v}{\partial w \partial \bar z}=0.
    \]
    Note that $u \geq v$. Now define $v_\eps := \frac{\abs{w}^2}{\abs{1+\eps-z}^2}$. It is straight-forward to check that these are maximal plurisubharmonic functions as well, and that
    \[
        v = \sup_{\eps>0} v_\eps,
    \]
    which shows that $v(z,w)$ is quasibounded. Furthermore, $v$ is constant on the analytic discs $f_k(\zeta)=(k\zeta,k(\zeta-1))$, and in particular, $v \circ f_k$ is harmonic. This implies that $v$ is larger than any bounded plurisubharmonic function smaller than $u$, and thus equals the Perron--Bremermann envelope of $u$ over $\psh^-(\Omega)$.

    Finally, note that for all $(z_0,w_0) \in \bd \Omega$, we have that
    \begin{align*}
        \limsup_{(z,w) \to (z_0,w_0)} u(z,w) & =\limsup_{(z,w) \to (z_0,w_0)} v(z,w) \\
        \liminf_{(z,w) \to (z_0,w_0)} u(z,w) &= \liminf_{(z,w) \to (z_0,w_0)} v(z,w).
    \end{align*}
    Indeed, if $z_0\neq 1$, the above equalities clearly hold, and at the boundary point $(z_0,w_0)=(1,0)$, we have
    \begin{align*}
        \limsup_{(z,w) \to (1,0)} u(z,w) & =\limsup_{(z,w) \to (1,0)} v(z,w) =\infty \\
        \liminf_{(z,w) \to (1,0)} u(z,w) &= \liminf_{(z,w) \to (1,0)} v(z,w)=0.
    \end{align*}
    To get equality everywhere on the boundary, we can simply add the pluriharmonic function $-\log\abs{z-1}$ to both $u$ and $v$. Specificially,
    let $\tilde u(z,w) = u(z,w) - \log\abs{z-1}$ and
    $\tilde v(z,w) = v(z,w) - \log|z-1|$. Then $\tilde u$ and $\tilde v$
    are two maximal plurisubharmonic functions continuous on $\bar\Omega$
    (as extended real valued functions) that coincide on $\bd \Omega$.
\end{example}

\end{document}